 \theoremstyle{plain}
\theoremstyle{remark}  \newtheorem{remark}{\noindent\mbox{Remark}}
 \theoremstyle{plain}
 \theoremstyle{plain}\newtheorem{lemma}{\noindent\mbox{Lemma}}
\theoremstyle{plain} \newtheorem{theorem}{\noindent\mbox{Theorem}}
 \theoremstyle{plain}
 \theoremstyle{plain}
\theoremstyle{definition} \newtheorem{definition}{\noindent\mbox{Definition}}
 \def\qed{\hfill$\Box$\medskip}
 \def\rto{\rightarrow\infty}
\def\z{\left}
\def\y{\right}
 \def\no{\nonumber}
\begin{document}
 \title{\textbf{Regeneration of branching processes with immigration in varying environments}\thanks{Supported by National
Natural Science Foundation of China (Grant No. 11601494;11501008)}}                  

\author{   Baozhi \uppercase{Li}$^\dag$ \and Hongyan  \uppercase{Sun}$^\ddag$ \and
    Hua-Ming \uppercase{Wang}$^{\dag,\S}$
}
\date{}
\maketitle%
 \footnotetext[2]{School of Mathematics and Statistics, Anhui Normal University, Wuhu 241003, China  }
\footnotetext[3]{School of Sciences, China University of Geosciences, Beijing 100083, China}
\footnotetext[4]{Email: hmking@ahnu.edu.cn}
\vspace{-.5cm}

\begin{center}
\begin{minipage}[c]{12cm}
\begin{center}\textbf{Abstract}\quad \end{center}
In this paper, we consider certain linear-fractional branching processes with immigration in varying environments. For $n\ge0,$ let $Z_n$ counts the number of individuals of the $n$-th generation, which excludes the immigrant which enters into the system at time $n.$  We call $n$ a regeneration time if $Z_n=0.$ We give first a criterion for the finiteness or infiniteness of the number of regeneration times.  Then, we construct some concrete examples to exhibit the strange phenomena caused by the so-called varying environments. It may happen that the process is extinct but there are only finitely many regeneration times. Also, when there are infinitely many regeneration times, we show that for each $\varepsilon>0,$ the number of regeneration times in $[0,n]$ is no more than  $(\log n)^{1+\varepsilon}$ as $n\rto.$

\vspace{0.2cm}

\textbf{Keywords:}\  Branching processes,  varying environments, immigration, regeneration
\vspace{0.2cm}

\textbf{MSC 2020:}\ 60J80, 60J10
\end{minipage}
\end{center}

\section{Introduction}
\subsection{Background and motivation}
It is known that Galton-Watson processes are widely applied in nuclear physics, biology, ecology, epidemiology and many others and have been extensively studied, see \cite{an72,hjv,ka} and references therein. The study of Galton-Watson processes can be extended directly in two directions. One popular extension is the branching processes in random environment(BPRE hereafter). This object has attracted the attentions of many authors and its development is much satisfying up to now. Many interesting results arise from the existence of the random environments.  We refer the reader to \cite{kv} and references therein for details. Another interesting extension of the Galton-Watson process is the branching processes in varying environment(BPVE hereafter). Compared with BPREs, the situation of the study of BPVEs is not so satisfying. The main reason is that a BPVE is not a time homogeneous Markov chain any longer. But BPREs do own some homogeneous property. Indeed, if the environments are assumed to be stationary and ergodic, then a BPRE is a time homogeneous process under the annealed probability.
The emerging of the so-called varying environments also brings some strange phenomena to the branching processes. For example, the process may ``fall asleep" at some positive state \cite{lin}, it may  diverge at different exponential rates \cite{ms} and the tail probabilities of the surviving time may show some strange asymptotics \cite{fuj, wy}. For other aspects of the study of BPVEs, we refer the reader to \cite{bp, bcn, cw, dhkp, j, jon, ker} and references therein.

In this paper, we study BPVEs with immigration. For simplicity, we assume that in each generation, only one immigrant immigrates into the system. Roughly speaking,  for $n\ge0,$ let $Z_n$ be the population size of individuals in the $n$-th generation, which does not count the immigrant entering the system at time $n.$ If $Z_n=0,$ we call $n$ a regeneration time. Our aim is to give necessary and sufficient conditions to decide whether the process has finite or infinitely many regeneration times and study the asymptotics of  the number of regeneration times in the interval $[0,n]$ when the process owns infinitely many regeneration times. We should note that for Galton-Watson processes or BPREs with immigration, if the process has one regeneration time, it must has infinitely many regeneration times, in other words, it will never happen that such a process owns finitely many regeneration times if there are any due to the time homogeneity. But for BPVEs, we construct some concrete near-critical BPVEs with immigration, which exhibit some totally different phenomena. On one side, it may happen that the process is extinct but there are only finitely many regeneration times. On the other side, for such near-critical settings, we also show that for each $\varepsilon>0,$ the number of regeneration times in $[0,n]$ is no more than  $(\log n)^{1+\varepsilon}$ as $n\rto.$

Our motivation originates from two aspects, the regeneration structure of BPREs with immigration and the cutpoints of random walks in varying environments. On one side, in \cite{kks}, in order to study the stable limit law of random walks in random environments, a regeneration structure of a single-type BPRE with immigration was constructed and the tail probabilities of the regeneration time and the number of the total progeny before the first regeneration time were estimated further.   The related problems of the multitype case of this regeneration structure can be found in \cite{key, roi, wact}. Along this line, it is natural for us to consider the number of regeneration times of BPVEs with immigration. On the other side, in \cite{cfrb, jlp, lmw, wmp}, a class of questions related to the cutpoints of the random walks in varying environments was considered. We find that the regeneration structures for BPVEs with immigration and the excursions between successive cutpoints share some similarities, so that we aim to study the regeneration of BPVEs with immigration in this paper.

We treat currently only the regeneration times of one-type  BPVEs with immigration with geometric offspring distributions. Our method also works for the two-type case which is much complicated and will appear in another forthcoming paper.

Before giving the model and stating the main results, we introduce here some conventions and notations which will be used in what follows. We use the notation $\#\{\ \}$ to count the number of elements in a set $\{\ \}.$ The notation $a(n)\sim b(n)$ means $a(n)/b(n)\rightarrow 1$ as $n\rto.$ Unless otherwise stated, $c$ is a strictly positive number whose value may change from one to another. We also adopt the convention that empty product equals $1$ and empty sum equals $0$.

\subsection{Models and main results}

For $k\ge1,$ suppose $0<p_k\le 1/2,q_k>0$  are numbers such that  $p_k+q_k=1$ and
 $$f_k(s)=\frac{p_k}{1-q_ks}, s\in [0,1].$$
Let $Z_n,n\ge0$ be a Markov Chain such that $Z_0=0$ and
\begin{align}
  E\z( s^{Z_n}\big|Z_0,...,Z_{n-1}\y)=\z[f_{n}(s)\y]^{1+Z_{n-1}}, n\ge1.\label{dz}
\end{align}
Clearly,   $\{Z_n\}_{n\ge0}$ forms a branching process in varying environment  with exactly one immigrant in each generation.
Now, we define the regeneration time which we are concerned.

\begin{definition}
   Let $C=\{n\ge0: Z_n=0\}$ and for $k\ge1,$ let $C_k=\{n: n+i\in C, 0\le i\le k-1\}.$ If $n\in C,$ we call $n$ a regeneration time of the process $\{Z_n\}.$ If $n\in C_k,$ we call $n$ an $k$-strong regeneration time  of the process $\{Z_n\}.$
\end{definition}
For $k\ge1,$ let $m_k=f_k'(1)=\frac{q_k}{p_k},$ and for $n\ge k\ge 1,$ set
\begin{align}
  D(k,n):=1+\sum_{j=k}^n m_j\cdots m_n\text{ and write simply }D(n)\equiv D(1,n). \label{dkn}
\end{align}
We are now ready to state the main results.
\begin{theorem}\label{m} Suppose that $p_n=1/2-r_n, n\ge1$ where $0\le r_n\le 1/2-\varepsilon$ for some $\varepsilon>0.$ Let $D(n), n\ge1$ be the ones in \eqref{dkn}. If $$\sum_{n=2}^\infty\frac{1}{D(n)\log n}<\infty,$$ then $\{Z_n\}$ has at most finitely many regeneration times almost surely. If there exists some $\delta>0$ such that $D(n)\le \delta n\log n$ for $n$ large enough and $$\sum_{n=2}^\infty\frac{1}{D(n)\log n}=\infty,$$ then $\{Z_n\}$ has infinitely many $k$-strong regeneration times almost surely.
\end{theorem}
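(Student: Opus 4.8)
The plan is to reduce the entire statement to a convergence/divergence criterion for one explicit random series, after first pinning down the law of $Z_n$ exactly. Writing $g_n(s)=Es^{Z_n}$, the defining relation \eqref{dz} gives the recursion $g_n(s)=f_n(s)\,g_{n-1}(f_n(s))$, and since each $f_k$ is linear-fractional one checks by induction that $g_n$ is again linear-fractional: if $g_{n-1}(s)=a_{n-1}/(b_{n-1}-c_{n-1}s)$, then the factor $1-q_ns$ cancels and $g_n(s)=a_{n-1}p_n/\big((b_{n-1}-c_{n-1}p_n)-b_{n-1}q_ns\big)$. Tracking the three coefficients and comparing with the recursion $D(n)=1+m_nD(n-1)$ coming from \eqref{dkn}, I would obtain the clean identity $P(Z_n=0)=1/D(n)$ (indeed $Z_n$ is geometric with parameter $1/D(n)$). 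The same computation localized after a regeneration yields the conditional laws I will need, namely $P(Z_{n'}=0\mid Z_m=0)=1/D(m+1,n')$ and, from a general state, $P(Z_{n'}=0\mid Z_{n-1}=z)=D(n,n')^{-1}(1-s_{n,n'})^{z}$, where $s_{n,n'}$ is the survival probability to time $n'$ of one lineage born at time $n-1$.

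The conceptual backbone is L\'evy's conditional Borel--Cantelli lemma applied to the adapted events $\{Z_n=0\}$. Since $P(Z_n=0\mid\mathcal F_{n-1})=[f_n(0)]^{1+Z_{n-1}}=p_n^{1+Z_{n-1}}$, L\'evy's lemma gives the almost sure identity of events
\[
\{Z_n=0\ \text{i.o.}\}=\Big\{\textstyle\sum_{n}p_n^{1+Z_{n-1}}=\infty\Big\}\quad\text{a.s.},
\]
so the theorem is equivalent to a zero--one criterion for the random series $S:=\sum_n p_n^{1+Z_{n-1}}$. The essential difficulty is already visible here: $ES=\sum_nP(Z_n=0)=\sum_n1/D(n)$, and this may diverge precisely in the regime where the theorem asserts only finitely many regeneration times (e.g. $D(n)\asymp n\log n(\log\log n)^{-2}$). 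Hence a first--moment bound is hopeless, and the gap between $\sum1/D(n)$ and the correct threshold $\sum1/(D(n)\log n)$ must be produced by the strong positive correlation (clustering) of the events $\{Z_n=0\}$.

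For the convergence half I would cut $\{1,2,\dots\}$ into geometric blocks $I_j=(e^{j},e^{j+1}]$, so that $\log n\asymp j$ on $I_j$, and estimate $P(\exists\,n\in I_j:Z_n=0)$ rather than $EX_j$, where $X_j=\#\{n\in I_j:Z_n=0\}$. Writing $P(X_j\ge1)=EX_j/E[X_j\mid X_j\ge1]$, the point is that one regeneration in a block forces many: conditioning on the first regeneration time $\tau\in I_j$ and using $P(Z_n=0\mid Z_\tau=0)=1/D(\tau+1,n)$ together with the elementary bound $D(k,n)\ge n-k+1$ (valid since $m_k\ge1$ under $p_k\le1/2$), the expected cluster size $\sum_{n>\tau}1/D(\tau+1,n)$ is of order $\log(\text{block length})\asymp j$. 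This yields $P(\exists\,\text{regen in }I_j)\lesssim j^{-1}EX_j\asymp\sum_{n\in I_j}1/(D(n)\log n)$; summing over $j$ and applying the ordinary Borel--Cantelli lemma gives finitely many regeneration times. (In the degenerate sub-case where means stay bounded away from $1$, $D$ grows geometrically, $\sum1/D(n)<\infty$, and a bare Markov bound $P(\exists)\le EX_j$ already suffices; making the two regimes uniform is part of the bookkeeping.)

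The divergence half runs the same estimate in reverse: a Paley--Zygmund second-moment bound on $X_j$, with $EX_j^2\lesssim j\,EX_j$ obtained from the correlation estimate $\sum_{n'>n,\,n'\in I_j}1/D(n+1,n')\lesssim j$, gives $P(\exists\,\text{regen in }I_j)\gtrsim j^{-1}EX_j\asymp\sum_{n\in I_j}1/(D(n)\log n)$, whose sum diverges; here the hypothesis $D(n)\le\delta n\log n$ is exactly what guarantees these block probabilities are genuinely of this order and bounded away from degeneracy. Upgrading divergent block probabilities to almost surely infinitely many regenerations is the main obstacle, because successive blocks are linked through the (possibly large) value of $Z$ at the block boundaries; I would control this dependence by a Kochen--Stone / conditional second Borel--Cantelli argument built on the Markov structure. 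Finally, the passage from ordinary to $k$-strong regeneration is cheap: given $Z_n=0$ one has $P(Z_{n+1}=\dots=Z_{n+k-1}=0\mid Z_n=0)=\prod_{i=1}^{k-1}p_{n+i}\ge\varepsilon^{k-1}>0$ uniformly, so a conditional Borel--Cantelli applied along the (now infinitely many) regeneration times promotes them to infinitely many $k$-strong ones.
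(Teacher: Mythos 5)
Your setup (the exact linear-fractional law $P(Z_n=0)=1/D(n)$, the conditional law after a regeneration, the L\'evy/conditional Borel--Cantelli reformulation) matches the paper's auxiliary Section 2, and your within-block second moment computation and the $k$-strong upgrade at the end are sound. But both halves of your argument have a gap at the decisive step, and in the convergence half the gap is fatal as stated.

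\textbf{Convergence half: your clustering inequality points the wrong way.} You condition on the \emph{first} regeneration $\tau$ in a block and claim that the expected number of subsequent regenerations, $\sum_{n>\tau}1/D(\tau+1,n)$, ``is of order $j$'', citing $D(k,n)\ge n-k+1$. That inequality gives $1/D(\tau+1,n)\le 1/(n-\tau)$, i.e.\ an \emph{upper} bound of order $j$ on the cluster sum; what your identity $P(X_j\ge1)=EX_j/E[X_j\mid X_j\ge1]$ requires is a \emph{lower} bound $E[X_j\mid X_j\ge1]\gtrsim j$, which would need an upper bound on $D(\tau+1,n)$. No such bound is available: the hypotheses of the convergence half impose no ceiling on $D$, and if, say, $m_i\equiv 2$ on a stretch, then $D(\tau+1,n)$ grows geometrically and the forward cluster sum is $O(1)$. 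Your parenthetical dichotomy (``means bounded away from $1$'' versus the rest) does not exhaust the possibilities --- environments mixing critical and supercritical stretches fall in neither regime --- and making it work is not bookkeeping; it is the actual difficulty. The paper's proof avoids it entirely by reversing the conditioning: it conditions on the \emph{last} regeneration $l_m=n$ in a block and counts regenerations \emph{before} it, using
\begin{align*}
P(Z_i=0\mid Z_n=0)=\frac{D(n)}{D(i)D(i+1,n)}\ge\frac{1}{n-i+1},
\end{align*}
which is inequality \eqref{pd} and holds uniformly under the sole assumption $m_j\ge1$ (the factor $D(n)$ in the numerator is what saves the bound). Backward in time the clustering is unconditional; forward in time it is not. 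This reversal is the key idea of the paper's first half and is exactly what your sketch is missing.

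\textbf{Divergence half: the cross-block dependence is the theorem, and you leave it open.} Your Paley--Zygmund bound $P(X_j\ge1)\gtrsim j^{-1}EX_j$ within a block is fine, but you then say the passage to ``infinitely many blocks contain regenerations a.s.'' is ``the main obstacle'' to be handled ``by a Kochen--Stone / conditional second Borel--Cantelli argument built on the Markov structure''. That is a statement of intent, not an argument: the events $\{X_j\ge1\}$ are positively correlated across blocks (conditioning on an earlier regeneration only increases later regeneration probabilities), so one needs quantitative \emph{upper} bounds $P(B_jB_l)\le(1+\varepsilon)P(B_j)P(B_l)$ for most pairs plus control of the near-diagonal pairs, and this is where the hypothesis $D(n)\le\delta n\log n$ must enter --- not, as you suggest, merely to keep the block probabilities non-degenerate. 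The paper executes this with a concrete device: it works with the sparse sequence $n_j=[j\log j]$ and events $A_j=\{n_j\in C_k\}$, computes pairwise correlations exactly via \eqref{dnp} as $P(A_jA_l)=P(A_j)P(A_l)\bigl(1-\prod_{i=n_j+k-1}^{n_l}(1-1/D(i))\bigr)^{-1}$, splits pairs into ``far'' ($\sum_i 1/D(i)\ge\log\frac{1+\varepsilon}{\varepsilon}$, giving the factor $1+\varepsilon$) and ``near'', and uses $D(n)\le\delta n\log n$ precisely to show the near range has length at most $j^\gamma$, so its total contribution is $O(P(A_j))$; Petrov's generalized Borel--Cantelli then concludes. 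Until you supply an argument of comparable precision for your block events, the divergence half is not proved either.
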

Next we consider some near-critical BPVEs. Fix $B\ge 0$ and let $i_0$ be a positive number such that $\frac{B}{4i_0}<\frac{1}{2}.$ For $i\ge1,$
set \begin{align}\label{pi}
  p_i=\z\{\begin{array}{ll}
    {1}/{2}-\frac{B}{4i}, &i >i_0,\\
    {1}/{2}, &i\le i_0.
  \end{array}\y.
\end{align}
 We have the following criterion for the finiteness of the number of regeneration times.
\begin{theorem}\label{nc}
  Fix $B\ge0$  and and for $i\ge1,$ let $p_i$ be the one in \eqref{pi}.
  If $B\ge1,$ then $\{Z_n\}$ has at most finitely many regeneration times almost surely. Otherwise, if $B<1,$ then $\{Z_n\}$ has infinitely many regeneration times almost surely.
\end{theorem}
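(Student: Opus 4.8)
The plan is to deduce Theorem \ref{nc} from Theorem \ref{m} by computing the growth of $D(n)$ for the specific environment \eqref{pi} and then reading off the convergence or divergence of $\sum_n\frac{1}{D(n)\log n}$. First I would record that for $i>i_0$ one has $q_i=1/2+\frac{B}{4i}$, whence
$$m_i=\frac{q_i}{p_i}=\frac{2i+B}{2i-B}=\frac{i+B/2}{i-B/2},$$
while $m_i=1$ for $i\le i_0$. Note that $i_0>B/2$ (this is exactly the requirement $\frac{B}{4i_0}<\frac12$), so the denominators $i-B/2$ are positive for $i>i_0$. Since $r_i=B/(4i)\to0$ and stays below $1/2$, the standing hypothesis $0\le r_i\le 1/2-\varepsilon$ of Theorem \ref{m} holds for a suitable $\varepsilon>0$, so Theorem \ref{m} is applicable throughout.

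Next I would estimate the products $m_j\cdots m_n$. For $i_0<j\le n$, telescoping through Gamma functions gives
$$m_j\cdots m_n=\prod_{k=j}^n\frac{k+B/2}{k-B/2}=\frac{\Gamma(n+1+B/2)\,\Gamma(j-B/2)}{\Gamma(n+1-B/2)\,\Gamma(j+B/2)},$$
and the standard asymptotic $\Gamma(x+a)/\Gamma(x+b)\sim x^{a-b}$ yields constants $0<c_1\le c_2$ with $c_1(n/j)^B\le m_j\cdots m_n\le c_2(n/j)^B$ uniformly in $i_0<j\le n$. For $j\le i_0$ the leading factors equal $1$, so the lower endpoint is effectively fixed at $i_0+1$ and $m_j\cdots m_n\asymp n^B$; these finitely many terms only shift constants. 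Summing over $j$,
$$D(n)=1+\sum_{j=1}^n m_j\cdots m_n\asymp n^B\sum_{j=1}^n j^{-B},$$
and I would split into three regimes: for $B>1$ the sum $\sum_j j^{-B}$ converges, so $D(n)\asymp n^B$; for $B=1$ one gets $D(n)\asymp n\log n$; and for $B<1$ one gets $D(n)\asymp n$.

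Finally I would feed these estimates into Theorem \ref{m}. When $B>1$, $\frac{1}{D(n)\log n}\asymp\frac{1}{n^B\log n}$ and $\sum_n n^{-B}(\log n)^{-1}<\infty$; when $B=1$, $\frac{1}{D(n)\log n}\asymp\frac{1}{n(\log n)^2}$, again a convergent series. Hence in both cases the first half of Theorem \ref{m} gives at most finitely many regeneration times, covering $B\ge1$. When $B<1$, $\frac{1}{D(n)\log n}\asymp\frac{1}{n\log n}$, which diverges, and moreover $D(n)\asymp n\le\delta n\log n$ for large $n$ with $\delta$ the implied constant, so the second half of Theorem \ref{m} applies and yields infinitely many $k$-strong, hence infinitely many, regeneration times.

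The routine part is the bookkeeping with constants; the main obstacle is establishing the uniform two-sided bound $m_j\cdots m_n\asymp(n/j)^B$ with constants independent of $j$ and $n$ (rather than a mere asymptotic for each fixed $j$), since it is this uniformity that legitimizes interchanging the $j$-sum with the power-law estimate and pins $D(n)$ to the correct order in each regime. One must also handle the boundary indices $j\le i_0$ carefully, though they contribute only a term of order $n^B$ and affect none of the three conclusions.
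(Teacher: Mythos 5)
Your proposal is correct and follows essentially the same route as the paper: both deduce Theorem \ref{nc} from Theorem \ref{m} by pinning down the order of $D(n)$ in the three regimes ($D(n)\asymp n^B$ for $B>1$, $\asymp n\log n$ for $B=1$, $\asymp n$ for $B<1$) and then checking convergence or divergence of $\sum_n \frac{1}{D(n)\log n}$ together with the side condition $D(n)\le\delta n\log n$ when $B<1$. The only difference is cosmetic: you obtain the product asymptotics $m_j\cdots m_n\asymp (n/j)^B$ via an exact Gamma-function telescoping identity with uniform constants, whereas the paper expands $m_i=1+\frac{B}{i}-\frac{B}{2i^2}+o\left(\frac{1}{i^2}\right)$ and works with the normalized sums $\sum_i m_1^{-1}\cdots m_i^{-1}$; both computations yield the same conclusion.
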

\begin{remark}
  Let $\{Y_n\}$ be a BPVE such that $Y_0=1$ and
  $E\z( s^{Y_n}\big|Y_0,...,Y_{n-1}\y)=\z[f_{n}(s)\y]^{Y_{n-1}}$ for $n\ge1.$
  Denote by $\nu=\inf\{n\ge 0: Y_n=0\}$ be the extinction time of $\{Y_n\}.$ Using \eqref{fkn}, we get $P(\nu>n)=1/(1+\sum_{j=1}^n m_1^{-1}\cdots m_{j}^{-1}).$
  If $B=1,$ then from \eqref{sms}, we have $\sum_{i=1}^{n}m_1^{-1}\cdots m_i^{-1}\sim c\log n\rightarrow\infty$  as $n\rto.$ Therefore, $P(\nu=\infty)=0,$ that is,  with probability 1, $\{Y_n\}$ is extinct. But in this case,  we see from Theorem \ref{nc} that
$\{Z_n\}$ has at most finitely many regeneration times. Such phenomenon never happens
to time-homogeneous branching process. Indeed, whenever $m_k\equiv m, k\ge1,$ it is clear that if $\{Y_n\}$ is extinct, then $\{Z_n\}$ must have infinitely many regeneration times.


\end{remark}

When there are infinitely many regeneration times, the following theorem gives the asymptotics of the number of regeneration times in $[0,n].$
\begin{theorem}\label{nr}
  Fix $0\le B<1$  and for $i\ge 1,$ let $p_i$ be the one in \eqref{pi}. Then
   \begin{align*}
        & \lim_{n\rto}\frac{E\#\{k: k\in C\cap[0,n]\}}{\log n}=c>0,\end{align*}
and for all $\varepsilon>0,$
   \begin{align*}
    &\lim_{n\rto}\frac{\#\{k: k\in C\cap[0,n]\}}{(\log n)^{1+\varepsilon}}=0,  \text{ a.s..}
  \end{align*}
  \end{theorem}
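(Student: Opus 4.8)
The plan is to reduce everything to the exact identity $P(Z_n=0)=1/D(n)$ and then to an $L^p$-estimate for the counting variable $N_n:=\#\{k: k\in C\cap[0,n]\}=\sum_{k=0}^n\mathbbm{1}_{\{Z_k=0\}}$. Writing $F_n(s)=E s^{Z_n}$, relation \eqref{dz} gives the recursion $F_n(s)=f_n(s)F_{n-1}(f_n(s))$ with $F_0\equiv1$, which unrolls to $F_n(s)=\prod_{k=1}^n f_{k,n}(s)$ for $f_{k,n}=f_k\circ\cdots\circ f_n$. Using the substitution $u(s)=1/(1-s)$, which turns each $f_k$ into the affine map $u\mapsto u/m_k+1$, one computes $1-f_{k,n}(0)=V_{k-1}/(V_{k-1}+\cdots+V_n)$ with $V_j:=\prod_{i=1}^j m_i^{-1}$; the product $\prod_{k=1}^n f_{k,n}(0)$ then telescopes to $V_n/\sum_{i=0}^n V_i=1/D(n)$. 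Hence $E N_n=\sum_{k=0}^n 1/D(k)$. For the environment \eqref{pi} one has $\log m_i=B/i+O(i^{-3})$, so $M_i:=\prod_{j=1}^i m_j\sim c_1 i^B$; writing $D(k)=1+M_k\sum_{i=0}^{k-1}1/M_i$ and using $B<1$ gives $D(k)\sim k/(1-B)$, whence $1/D(k)\sim(1-B)/k$. A Cesàro argument then yields $E N_n\sim(1-B)\log n$, which is the first assertion with $c=1-B>0$.

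For the almost sure statement I would use that $\{Z_n\}$ is Markov and that $Z_j=0$ restarts the process with the shifted environment $f_{j+1},f_{j+2},\dots$, so that for $0\le k_1<\cdots<k_p\le n$,
\[ P(Z_{k_1}=0,\dots,Z_{k_p}=0)=\frac{1}{D(k_1)}\prod_{i=1}^{p-1}\frac{1}{D(k_i+1,k_{i+1})}. \]
The key estimate to establish is the uniform bound $g(j,n):=\sum_{k=j+1}^n 1/D(j+1,k)\le C\log n$, valid for all $0\le j<n$. I would prove it by splitting $(j,n]$ at $2j$: on $k\le 2j$ the crude inequality $D(j+1,k)\ge 1+(k-j)$ gives a contribution $\le\log j+O(1)$, while on $k>2j$ the asymptotics $M_r\asymp r^B$ give $D(j+1,k)\ge ck$ and hence a contribution $\le c\log(n/j)+O(1)$; both are $\le C\log n$.

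Grouping the $p$-tuples $(k_1,\dots,k_p)$ by their set of distinct values then shows $E N_n^p\le p^{p+1}\max_{q\le p}F_q(n)$, where $F_q(n)=\sum_{0\le k_1<\cdots<k_q\le n}\frac{1}{D(k_1)}\prod_{i=1}^{q-1}\frac{1}{D(k_i+1,k_{i+1})}$. Summing out the largest index and invoking $g(\cdot,n)\le C\log n$ gives $F_q(n)\le C\log n\cdot F_{q-1}(n)$, so by induction $F_q(n)\le(C\log n)^q$ and $E N_n^p\le C_p(\log n)^p$ for every fixed $p$. Fixing $\varepsilon>0$ and an integer $p>1/\varepsilon$, Markov's inequality along $n_j=\lceil e^j\rceil$ yields $P(N_{n_j}\ge\delta(\log n_j)^{1+\varepsilon})\le C_p\delta^{-p}(\log n_j)^{-p\varepsilon}\asymp C_p\delta^{-p}j^{-p\varepsilon}$, which is summable since $p\varepsilon>1$; Borel--Cantelli (and letting $\delta\downarrow0$) gives $N_{n_j}/(\log n_j)^{1+\varepsilon}\to0$ almost surely, and the monotonicity of $N_n$ together with $\log n_{j+1}/\log n_j\to1$ upgrades this to the full limit.

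The main obstacle is the uniform estimate $g(j,n)\le C\log n$. Conditioning on a regeneration at time $j$ makes further regenerations in $(j,2j]$ unusually likely (the clustering that produces the $\log j$ term), so one must verify that this clustering still contributes only an $O(\log n)$ factor; this is exactly what keeps the inductive bound $F_q(n)\le(C\log n)^q$, and hence the Poisson-type moment growth $E N_n^p\le C_p(\log n)^p$, intact, and it is the one place where the near-critical choice \eqref{pi} (through $M_r\asymp r^B$ with $B<1$) is used decisively.
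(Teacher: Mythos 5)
Your proposal is correct, but it handles the almost-sure part by a genuinely different route than the paper. The first assertion is proved essentially as in the paper: both arguments rest on the exact identity $P(Z_n=0)=1/D(n)$ and the asymptotics $D(n)\sim n/(1-B)$, so that $E\#\{k\in C\cap[0,n]\}=\sum_{k\le n}1/D(k)\sim(1-B)\log n$ (your version is slightly sharper, since you identify the constant $c=1-B$ rather than just $c>0$). For the almost-sure bound, the paper exploits the monotonicity of $S_n=\#\{k\in C\cap[0,n]\}$ to write $E\max_{l\le n}|S_l|=ES_n\le\sum_{i\le n}c/i$, and then simply cites the Fazekas--Klesov general strong law of large numbers with $b_n=(\log n)^{1+\varepsilon}$ and the summability of $\sum_i 1/(i(\log i)^{1+\varepsilon})$ --- a three-line first-moment argument. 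You instead build a self-contained moment method: the product formula $P(Z_{k_1}=0,\dots,Z_{k_p}=0)=D(k_1)^{-1}\prod_i D(k_i+1,k_{i+1})^{-1}$ (which is the correct form; note the paper's display \eqref{zkn0} has an index slip, $D(k,n)$ instead of $D(k+1,n)$, though its Lemma \ref{prs} uses the right one), the uniform estimate $\sum_{k=j+1}^n 1/D(j+1,k)\le C\log n$ (your split at $2j$ does work: $D(j+1,k)\ge 1+(k-j)$ below $2j$, and $D(j+1,k)\ge ck$ above, using $m_1\cdots m_r\asymp r^B$ with $B<1$), the induction $F_q(n)\le(C\log n)^q$, and hence $EN_n^p\le C_p(\log n)^p$, followed by Markov's inequality, Borel--Cantelli along $n_j=\lceil e^j\rceil$, and monotonicity. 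Your approach is considerably longer but buys more: it avoids invoking the external SLLN result, it quantifies the Poisson-like growth of all moments of $N_n$, and the correlation bound $g(j,n)\le C\log n$ makes explicit why the clustering of regenerations after a regeneration does not inflate the count --- information the paper's first-moment argument never sees.
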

\begin{remark}
  Notice that Theorem \ref{nr} contains the case $B=0.$ In this case, $p_i\equiv 1/2$ and $m_i\equiv 1$ for all $i\ge1,$ so that $\{Z_n\}$ is indeed a critical Galton-Watson process with immigration. As what Theorem \ref{nr} shows, it seems that up to the multiplication of a positive constant, the value of $0\le B<1$ does not affect the order of the number of regeneration times in $[0,n].$
\end{remark}
\noindent{\bf Outline of the paper.} The remainder of the paper is arranged as follows.
In Section \ref{sec2}, we give some auxiliary results. The detailed proofs of the main results will be given in Section \ref{sec3}.

\section{Auxiliary results}\label{sec2}
In this section, we give some preliminary results which will be used for proving the main theorems. To begin with, let $D(n)$ and $D(k,n), n\ge k\ge1$ be those defined in \eqref{dkn}.  By some easy computation, we get
\begin{align}
  D(n+1)&=1+m_{n+1}D(n),\  \frac{D(k,n)}{D(n)}=1-\prod_{j=k-1}^n\z(1-\frac{1}{D(j)}\y).\label{dnp}
\end{align}
Clearly, if $m_i\ge1$ for all $i\ge1,$ then $D(n)$ is monotone increasing in $n.$

Next,  we consider the probability generating function of the population size. For $n\ge0,$ let
 $F_n(s)=E(s^{Z_n}),s\in [0,1].$ By induction, from \eqref{dz}  we get
\begin{align}\label{fn}
  F_n(s)=\prod_{k=1}^n f_{k,n}(s), s\in [0,1], n\ge0,
\end{align}
where
$f_{k,n}(s)=f_k(f_{k+1}(\cdots(f_n(s)) )), n\ge k\ge 1.$
Recall that $m_k=f_k'(1)=\frac{q_k}{p_k},k\ge1.$ Then it is easily seen that
$f_k(s)=1-\frac{m_k(1-s)}{1+m_k(1-s)}, k\ge1.$ As a consequence, it follows by induction that
\begin{align}
  f_{k,n}(s)&=1-\frac{m_k\cdots m_n(1-s)}{1+\sum_{j=k}^n m_j\cdots m_n(1-s)}
  =\frac{1+\sum_{j=k+1}^n m_j\cdots m_n(1-s)}{1+\sum_{j=k}^n m_j\cdots m_n(1-s)}.\label{fkn}
\end{align}
Substituting \eqref{fkn} into \eqref{fn}, we get
\begin{align*}
  F_n(s)=\frac{1}{1+\sum_{j=1}^n m_j\cdots m_n(1-s)}
\end{align*}
which leads to
\begin{align}\label{zn0}
  P(Z_n=0)&=F_n(0)=\frac{1}{1+\sum_{j=1}^n m_j\cdots m_n}=\frac{1}{D(n)},n\ge1.
\end{align}
Similarly, we have
\begin{align}\label{zkn0}P(Z_n=0|Z_k=0)=\frac{1}{D(k,n)},n\ge k\ge1.\end{align}
With \eqref{zn0} and \eqref{zkn0} in hands, we can show the following lemma.
\begin{lemma}\label{prs}  Fix $k\ge1.$ {\rm(i)} For $n\ge1$ we have $$P(n\in C_k)=\frac{\prod_{i=1}^{k-1}p_{n+i}}{D(n)}.$$ {\rm (ii)} For $l\ge n+k,$ we have $$P(n\in C_k, l\in C_k)=\frac{\prod_{i=1}^{k-1}p_{n+i}}{D(n)}\frac{\prod_{i=1}^{k-1}p_{l+i}}{D(n+k,l)}.$$
\end{lemma}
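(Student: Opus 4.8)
The plan is to derive both identities from the Markov property of $\{Z_n\}$ together with two elementary building blocks. The first block is the one-step return probability: conditioning the defining relation \eqref{dz} on $Z_{m-1}=0$ gives $E(s^{Z_m}\mid Z_{m-1}=0)=[f_m(s)]^{1+0}=f_m(s)$, whence $P(Z_m=0\mid Z_{m-1}=0)=f_m(0)=p_m$. The second block consists of the ``fresh start'' probabilities already computed, namely $P(Z_n=0)=1/D(n)$ from \eqref{zn0} and the conditional return probability of \eqref{zkn0}. Everything then reduces to bookkeeping of indices along the chain.

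For part (i), I would write $\{n\in C_k\}=\{Z_n=0,Z_{n+1}=0,\ldots,Z_{n+k-1}=0\}$ and telescope using the Markov property,
\begin{align*}
P(n\in C_k)=P(Z_n=0)\prod_{i=1}^{k-1}P(Z_{n+i}=0\mid Z_{n+i-1}=0).
\end{align*}
Each factor in the product is a one-step return probability and hence equals $p_{n+i}$, while $P(Z_n=0)=1/D(n)$ by \eqref{zn0}; combining gives $P(n\in C_k)=\prod_{i=1}^{k-1}p_{n+i}\big/D(n)$.

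For part (ii), since $l\ge n+k$ the two blocks $[n,n+k-1]$ and $[l,l+k-1]$ of constrained indices are disjoint, so I would factor $P(n\in C_k,l\in C_k)=P(n\in C_k)\,P(l\in C_k\mid n\in C_k)$ and invoke the Markov property at the right endpoint $n+k-1$ of the first block to replace the conditioning event $\{n\in C_k\}$ by $\{Z_{n+k-1}=0\}$. Expanding $\{l\in C_k\}$ along the chain exactly as in part (i) yields
\begin{align*}
P(l\in C_k\mid Z_{n+k-1}=0)=P(Z_l=0\mid Z_{n+k-1}=0)\prod_{i=1}^{k-1}p_{l+i},
\end{align*}
where the intermediate, unconstrained states on $[n+k,l-1]$ are summed out inside the single long-range factor $P(Z_l=0\mid Z_{n+k-1}=0)$. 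Identifying this factor with $1/D(n+k,l)$ by the computation behind \eqref{zkn0} (the chain restarted from $0$ at time $n+k-1$) and multiplying by part (i) gives the claimed product.

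The only genuinely delicate point is the index bookkeeping in the long-range factor: conditioning on a zero at time $n+k-1$ must produce the denominator $D(n+k,l)$ rather than $D(n+k-1,l)$, because a zero at time $m$ acts as a fresh start whose subsequent return probability is governed by the environments from time $m+1$ onward. To guard against an off-by-one error I would verify the boundary case $l=n+k$, where the long-range factor collapses to the one-step probability $p_{n+k}=1/(1+m_{n+k})=1/D(n+k,n+k)$, confirming that the indices align. The remaining manipulations are routine applications of the Markov property.
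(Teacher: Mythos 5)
Your proof is correct and takes essentially the same route as the paper's: for (i) it telescopes with the Markov property using the one-step return probabilities $P(Z_m=0\mid Z_{m-1}=0)=f_m(0)=p_m$ together with $P(Z_n=0)=1/D(n)$, and for (ii) it replaces the conditioning event $\{n\in C_k\}$ by $\{Z_{n+k-1}=0\}$ and invokes the restart formula, exactly as the paper does. Your boundary check at $l=n+k$ is a worthwhile addition: it pins down the denominator as $D(n+k,l)$ (i.e.\ $P(Z_l=0\mid Z_m=0)=1/D(m+1,l)$), which is the form the paper actually uses inside its proof ($D(i+1,j)$), even though the displayed equation \eqref{zkn0} is written with an off-by-one index as $1/D(k,n)$.
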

\begin{proof}
  Fix $k\ge1.$ Using \eqref{zn0}, by definition, we have
  \begin{align*}
    P(n\in C_k)&=P(Z_n=0,Z_{n+1}=0,...,Z_{n+k-1}=0)\\
    &=P(Z_n=0)\prod_{j=0}^{k-2}P(Z_{n+j+1}|Z_{n+j}=0)\\
    &=\frac{\prod_{i=1}^{k-1}p_{n+i}}{D(n)}.
  \end{align*}
  The first part of the lemma is proved.
  To prove the second part, note that similar to first part, using \eqref{zkn0}, we have
  $$P(j\in C_k|Z_i=0)=\frac{\prod_{s=1}^{k-1}p_{j+s}}{D(i+1,j)}.$$ Consequently, we get
  \begin{align*}
    P(n&\in C_k, l\in C_k)=P(n\in C_k)P(l\in C_k|n\in C_k)\\
    &=P(n\in C_k)P(l\in C_k|Z_{n+k-1}=0)\\
    &=\frac{\prod_{i=1}^{k-1}p_{n+i}}{D(n)}\frac{\prod_{i=1}^{k-1}p_{l+i}}{D(n+k,l)},
  \end{align*}
  which finishes the proof of the second part. \qed
\end{proof}
\section{Proofs}\label{sec3}

\subsection{Proof of Theorem \ref{m}}
 In order to prove Theorem \ref{m}, we adopt an approach similar to the one used in \cite{cfrb}. To begin with, we prove the first part.
 For $j<i,$ set  $C_{j,i}=\{(2^j, 2^i]: x\in C\}$ and let $A_{j,i}=|C_{j,i}|$ be the cardinality of the set $C_{j,i}.$ On the event $\{A_{m,m+1}>0\},$ let $l_m=\max\{k:k\in C_{m,m+1}\}$ be the largest regeneration time in $C_{m,m+1}.$
Then for $m\ge1,$ we have
\begin{align}
  &\sum_{j=2^{m-1}+1}^{2^{m+1}}P(j\in C)=E(A_{m-1,m+1})\no\\
    &\quad\quad\ge \sum_{n=2^m+1}^{2^{m+1}}E(A_{m-1,m+1}, A_{m,m+1}>0,l_m=n)\no\\
    &\quad\quad=\sum_{n=2^m+1}^{2^{m+1}}P(A_{m,m+1}>0,l_m=n)E(A_{m-1,m+1}| A_{m,m+1}>0,l_m=n)\no\\
    &\quad\quad=\sum_{n=2^m+1}^{2^{m+1}}P(A_{m,m+1}>0,l_m=n)\sum_{i=2^{m-1}+1}^nP(i\in C| A_{m,m+1}>0,l_m=n)\no\\
    &\quad\quad\ge P(A_{m,m+1}>0)\min_{2^m<n\le 2^{m+1}}\sum_{i=2^{m-1}+1}^nP(i\in C| A_{m,m+1}>0,l_m=n)\no\\
    &\quad\quad=:a_mb_m. \label{smpjc}
\end{align}
Fix $2^{m}+1\le n\le 2^{m+1}$ and $2^{m-1}+1\le i\le n.$ Using Lemma \ref{prs} and the Markov property, we get
\begin{align}
  P(&i\in C| A_{m,m+1}>0,l_m=n)\no\\
  &=\frac{P\z(Z_i=0, Z_n=0, Z_t\ne0, n+1\le t\le 2^{m+1}\y)}{P\z(Z_n=0, Z_t\ne0, n+1\le t\le 2^{m+1}\y)}\no\\
  &=\frac{P(Z_i=0,Z_n=0)}{P(Z_n=0)}\frac{P\z( Z_t\ne0, n+1\le t\le 2^{m+1}|Z_i=0, Z_n=0\y)}{P\z(Z_t\ne0, n+1\le t\le 2^{m+1}|Z_n=0\y)}\no\\
  &= \frac{P(Z_i=0,Z_n=0)}{P(Z_n=0)}=\frac{D(n)}{D(i)D(i+1,n)}.\label{nl}
\end{align}
  But since $m_j\ge 1$ for all $j\ge1,$ we have
  \begin{align}
    &\frac{D(n)}{D(i)D(i+1,n)}=\frac{\sum_{j=1}^{n+1}m_j\cdots m_n}{\sum_{j=1}^{i+1}m_j\cdots m_i\sum_{j=i+1}^{n+1}m_j\cdots m_n}\no\\
    &\quad\quad=\frac{\sum_{j=1}^{n+1}m_1^{-1}\cdots m_{j-1}^{-1}}{\sum_{j=1}^{i+1}m_1^{-1}\cdots m_{j-1}^{-1}\sum_{j=i+1}^{n+1}m_{i+1}^{-1}\cdots m_{j-1}^{-1}}\no\\
    &\quad\quad\ge \frac{1}{\sum_{j=i+1}^{n+1}m_{i+1}^{-1}\cdots m_{j-1}^{-1}}\ge\frac{1}{n-i+1}.\label{pd}   \end{align}
Thus, taking \eqref{nl} and \eqref{pd} together, we deduce that
\begin{align}
  b_m&=\min_{2^m<n\le 2^{m+1}}\sum_{i=2^{m-1}+1}^nP(i\in C| A_{m,m+1}>0,l_m=n)\no\\
  &\ge \min_{2^m<n\le 2^{m+1}}\sum_{i=2^{m-1}+1}^n \frac{1}{n-i+1}=\min_{2^m<n\le 2^{m+1}}\sum_{j=1}^{n-2^{m-1}} \frac{1}{j}\no\\
  &=\sum_{j=1}^{2^{m-1}+1} \frac{1}{j}\ge \int_{1}^{2^{m-1}+2}\frac{1}{x}dx\ge (m-1)\log 2. \label{ebm}
\end{align}
Substituting \eqref{ebm} into \eqref{smpjc} and using Lemma \ref{prs}, we see that
\begin{align}
  \sum_{m=1}^{\infty}&P(A_{m,m+1}>0)\le  \sum_{m=1}^{\infty}\frac{1}{b_m}\sum_{j=2^{m-1}+1}^{2^{m+1}}P(j\in C)\no\\
  &\le c\sum_{m=1}^{\infty} \frac{1}{m}\sum_{j=2^{m-1}+1}^{2^{m+1}}\frac{1}{D(j)}\le c\sum_{m=1}^{\infty} \sum_{j=2^{m-1}+1}^{2^{m+1}}\frac{1}{D(j)\log j}\no\\
  &\le c\sum_{n=2}^\infty \frac{1}{D(n)\log n}.\no
\end{align}
Therefore, if $\sum_{n=2}^\infty \frac{1}{D(n)\log n}<\infty,$ then it follows by the Borel-Cantelli lemma that with probability 1, at most finitely many of the events $\{A_{m,m+1}>0\},m\ge1$ occur. Consequently, the process $\{Z_n\}$ has at most finitely many regeneration times. The first part of Theorem \ref{m} is proved.

Next we turn to prove the second part. Suppose there exists some $\delta>0$ such that $D(n)\le \delta n\log n$ for $n$ large enough and $\sum_{n=2}^\infty\frac{1}{D(n)\log n}=\infty.$

For $j\ge1,$ let $n_j=[j\log j]$ be the integer part of $j\log j$ and set $A_j=\{n_j\in C_k\}.$
In what follows, we fix a integer $j_0>0$ such that $(j_0+1)\log (j_0+1)-j_0\log j_0>k.$
By Lemma \ref{prs}, it follows that
\begin{align}
\sum_{j=j_0}^{\infty}P(A_j)=\sum_{j=2}^{\infty}\frac{\prod_{i=1}^{k-1}p_{n_j+i}}{D(n_j)}\ge c \sum_{j=j_0}^{\infty}\frac{1}{D([j\log j])}. \label{ad}
\end{align}
Since $m_j\ge 1$ for all $j\ge1,$ from \eqref{dnp} we see that $D(n)$ is increasing in $n.$
Thus applying \cite[Lemma 2.2]{cfrb}, we conclude that
$\sum_{j=j_0}^{\infty}\frac{1}{D([j\log j])}$ and $\sum_{j=j_0}^{\infty}\frac{1}{D(j)\log j}$  converge or diverge simultaneously.  Therefore, it follows from \eqref{ad} that
\begin{align}
  \sum_{j=j_0}^{\infty}P(A_j)=\infty.\label{spa}
\end{align}

For $j_0\le j<l,$  using \eqref{dnp} and Lemma \ref{prs}, we abtain
\begin{align}
  P(A_jA_l)&=P(n_j\in C_k, n_l\in C_k)=\frac{\prod_{i=1}^{k-1}p_{n_j+i}}{D(n_j)}\frac{\prod_{i=1}^{k-1}p_{n_l+i}}{D(n_j+k,n_l)}\no\\
  &=P(A_j)P(A_l) \frac{D(n_l)}{D(n_j+k,n_l)}\no\\
  &=P(A_j)P(A_l) \z(1-\prod_{i=n_j+k-1}^{n_l}\z(1-\frac{1}{D(i)}\y)\y)^{-1}\no\\
  &\le P(A_j)P(A_l)\z(1-e^{-\sum_{i=n_j+k-1}^{n_l}\frac{1}{D(i)}}\y)^{-1}.\label{ajl}
  \end{align}
Fix $\varepsilon>0$ and $j\ge j_0.$ Let $$\ell=\min\z\{l\ge j: \sum_{i=n_j+k-1}^{n_l}\frac{1}{D(i)}\ge \log \frac{1+\varepsilon}{\varepsilon}\y\}.$$
Clearly, for $l\ge\ell,$ $\z(1-e^{-\sum_{i=n_j+k-1}^{n_l}\frac{1}{D(i)}}\y)^{-1}\le 1+\varepsilon.$
Thus it follows from \eqref{ajl} that
\begin{align}
  P(A_jA_l)\le (1+\varepsilon)P(A_jA_l), \forall l\ge\ell. \label{up1}
\end{align}
Suppose next $j<l<\ell.$ For $0<u<\log \frac{1+\varepsilon}{\varepsilon},$ we have $1-e^{-u}\ge cu$ for some $c(\varepsilon)>0$ small enough. This fact and \eqref{ajl} yield that
\begin{align}
    P(A_jA_l)&\le cP(A_j)P(A_l)\z(\sum_{i=n_j+k-1}^{n_l}\frac{1}{D(i)}\y)^{-1}\no\\
    &\le c\frac{D(n_l)}{n_l-n_j+2}P(A_j)P(A_l)=c\z(\prod_{i=1}^{k-1}p_{n_l+i}\y)\frac{P(A_j)}{n_l-n_j-k+2}\no\\
    &\le\frac{cP(A_j)}{l\log l-j\log j},\no
  \end{align}
where for the second inequality we use the fact that $D(n)$ is increasing in $n.$
Consequently,
\begin{align}
    \sum_{j<l< \ell}P(A_jA_l)&\le \sum_{j<l< \ell }\frac{cP(A_j)}{l\log l-j\log j}\le cP(A_j)\sum_{l=j+1}^{\ell-1}\frac{1}{l\log l-k\log k}\no\\
    &\le cP(A_j)\frac{1}{\log j}\sum_{l=j+1}^{\ell-1}\frac{1}{l-j}\le cP(A_j)\frac{\log \ell}{\log j}.\label{eu}
    \end{align}
    Recall that \begin{align}
      \sum_{i=n_j+k-1}^{n_l}\frac{1}{D(i)}< \log \frac{1+\varepsilon}{\varepsilon}, j<l<\ell \text{ and } D(n)\le \delta n\log n \label{cd}
    \end{align}
    for some $\delta>0$ and $n$ large enough.
    We claim that if $j$ is large enough then \begin{align}
      \ell\le j^\gamma\text{ if } \gamma>\z(\frac{1+\varepsilon}{\varepsilon}\y)^{\delta/c}+\varepsilon.\label{cc}
    \end{align}
    Suppose in contrary that $\ell > j^\gamma.$ Then  for $j$ large enough
    \begin{align}
      \sum_{i=n_j+k-1}^{n_{\ell}}&\frac{1}{D(i)}\ge \frac{1}{\delta} \sum_{i=n_j+k-1}^{n_{\ell}}\frac{1}{i\log i}\ge \frac{1}{\delta}(\log\log n_{\ell}-\log\log (n_j+k-1 ))\no\\
      &\ge \frac{c}{\delta}(\log\log n_{\ell}-\log\log n_j) =  \frac{c}{\delta} \log \frac{\gamma\log j+\log\gamma+\log\log j}{\log j+\log\log j}   \no\\
      &\ge \frac{c}{\delta}\log(\gamma -\varepsilon)\ge \log \frac{1+\varepsilon}{\varepsilon},\no
    \end{align}
    which contradicts \eqref{cd}.

    Substituting \eqref{cc} into \eqref{eu}, for $j$ large enough, we have
    \begin{align}
      \sum_{j<l< \ell}&P(A_jA_l)\le  cP(A_j).\label{up2}
    \end{align}
    Taking \eqref{up1} and \eqref{up2} together, we conclude that for some integer $j_1\ge j_0>0,$
    \begin{align}
    \sum_{j=j_1}^N\sum_{j<l\le N}P(A_jA_l)\le \sum_{j=j_1}^N\sum_{j<l\le N, }(1+\varepsilon)P(A_j)P(A_l)+c\sum_{j=j_0}^N P(A_j)\no.
    \end{align}
    Therefore, taking \eqref{spa} into account, we have
   \begin{align}
     \alpha&:=\varliminf_{N\rto}\frac{\sum_{j=j_1}^N\sum_{j<l\le N}P(A_jA_l)-\sum_{j=j_1}^N\sum_{j<l\le N}(1+\varepsilon)P(A_j)P(A_l)}{\z(\sum_{j=j_1}^N P(A_j)\y)^2}\no\\
     &\le \varliminf_{N\rto} \frac{c}{\sum_{j=j_0}^N P(A_j)}=0.\no
   \end{align}
   An application of Borel-Cantelli lemma (see \cite{pe04}, p.235) yields that
   \begin{align}
     P(A_j,j\ge j_1\text{ occur infinitely often})\ge \frac{1}{1+\varepsilon +2\alpha}\ge \frac{1}{1+\varepsilon}.\no
   \end{align}
   Since $\varepsilon>0$ is arbitrary, we come to the conclusion that
    \begin{align}
     P(A_j,j\ge j_1\text{ occur infinitely often})=1.\no
   \end{align}
   The second part of the theorem is proved. \qed

\subsection{Proof of Theorem \ref{nc}} Fix $B\ge0$ and for $i\ge 1,$ let $p_i$ be the one in \eqref{pi}. Then $$m_i=\frac{q_i}{p_i}=\frac{\frac{1}{2}+\frac{B}{4i}}{\frac{1}{2}-\frac{B}{4i}}=1+\frac{B}{i}-\frac{B}{2i^2}+o\z(\frac{1}{i^2}\y)$$ as $i\rto.$ Consequently, we get
\begin{align}\label{ms}
  m_1\cdots m_n\sim cn^B, \text{ as }n\rto,
\end{align} which implies that
\begin{align}\label{sms}
  \sum_{i=1}^{n}m_1^{-1}\cdots m_i^{-1}\sim \left\{\begin{array}{ll}
    c,&\text{ if }B>1,\\
    c\log n,& \text{ if }B=1,\\
    c(1-B)^{-1}n^{1-B},&\text{ if }B<1,
  \end{array}\right. \text{ as }n\rto.
\end{align}
As a result, we have
\begin{align}\label{dln}
  \frac1{D(n)\log n}=\frac{m_1^{-1}\cdots m_n^{-1}}{\log n\sum_{i=1}^{n+1}m_1^{-1}\cdots m_{i-1}^{-1}}\sim \left\{\begin{array}{ll}
    \frac{c}{n^{B}\log n},&\text{ if }B>1,\\
    \frac{c}{n(\log n)^2},& \text{ if }B=1,\\
    \frac{c}{n\log n},&\text{ if }B<1,
  \end{array}\right.
\end{align}
as $n\rto.$
We thus come to the conclusion that
\begin{align}
  \sum_{n=2}^\infty\frac1{D(n)\log n} \z\{\begin{array}{ll}
<\infty&\text{ if } B\ge1,\\
=\infty &    \text{ if } B<1.
  \end{array}\y.\no
\end{align}
Note also that if $B<1,$ then from \eqref{dln} we see that $D(n)\sim cn$ as $n\rto.$ Therefore, applying Theorem \ref{m}, we finish the proof of Theorem \ref{nc}. \qed

\subsection{Proof of Theorem \ref{nr}} For $n\ge0,$ let $$X_n=\left\{\begin{array}{cc}
                         1 & \text{ if } Z_n=0, \\
                         0 & \text{ if } Z_n>0.
                       \end{array}
\right.$$
and  set $S_n=\sum_{k=0}^{n}X_i.$ It is easy to see that
$\#\{k: k\in C\cap[0,n]\}\equiv S_n.$

Since $0\le B<1,$ from \eqref{ms} and \eqref{sms}, we get  \begin{align}
  &m_1\cdots m_n\sim cn^B \text{ and }  \sum_{i=1}^{n}m_1^{-1}\cdots m_i^{-1}\sim c n^{1-B}, \text{ as }n\rto.\no
\end{align}
Consequently, from \eqref{zn0}, we have
\begin{align}\label{px}
  P(X_n=1)&=P(Z_n=0)=\frac{1}{1+\sum_{j=1}^n m_j\cdots m_n}\no\\
  &=\frac{m_1^{-1}\cdots m_n^{-1}}{1+\sum_{j=1}^n m_1^{-1}\cdots m_j^{-1}}\sim c\frac{n^{-B}}{n^{1-B}}=\frac{c}{n}, \text{ as }n\rto.
\end{align}
Therefore,
$$ ES_n\sim c\sum_{k=1}^n \frac{1}{k}\sim \log n, \text{ as }n\rto. $$
The first part of Theorem \ref{nr} is proved.

To prove the second part, noticing that
 $S_n$ is nonnegative and nondecreasing in $n,$ thus by \eqref{px}, we must have
\begin{align}\label{ss}
  E(\max_{1\le l\le n}|S_l|)=E(S_n)\le \sum_{i=1}^{n}\frac{c}{i}.
\end{align}
But for each $\varepsilon>0,$
\begin{align}\label{bc}
  \sum_{i=1}^{n}\frac{1}{i(\log i)^{(1+\varepsilon)}}<\infty.
\end{align}
Therefore, with \eqref{ss} and \eqref{bc} in hands,  applying \cite[Theorem 2.1]{fk01}, we conclude that
\begin{align*}
  \lim_{n\rto}\frac{\#\{k: k\in C \cap[0,n]\}}{(\log n)^{1+\varepsilon}}=\lim_{n\rto}\frac{S_n}{(\log n)^{1+\varepsilon}}=0, \text{ a.s..}
\end{align*}
The theorem is proved. \qed.


\vspace{.5cm}

\noindent{{\bf \Large Acknowledgements:}} The authors would like to thank Miss Tang Lanlan for some useful discussions  when writing the paper.


\begin{thebibliography}{99}
\addtolength{\itemsep}{-0.5em}

%

\bibitem{an72}{\sc Athreya, K. B. and  Ney, P. E.} (1972). Branching processes. Springer-Verlag.


\bibitem{bp} {\sc Bhattacharya, N. and Perlman, M.} (2017).  Time inhomogeneous branching processes conditioned on non-extinction. Preprint. arXiv:1703.00337 [math.PR].

\bibitem{bcn} {\sc Biggins, J. D., Cohn, H. and Nerman, O.} (1999). Multi-type branching in varying environment. {\em Stoch. Proc. Appl.} {\bf83,} 357--400.

\bibitem{cw} {\sc Cohn, H. and Wang, Q.} (2003). Multitype branching limit behavior. {\em Ann. Appl. Probab.} {\bf13,} 490--500.

\bibitem{cfrb} {\sc Cs\'aki, E.   F\"oldes, A. and R\'ev\'esz, P.} (2010). On the number of cutpoints of the transient nearest neighbor random walk on the line. {\it J. Theor. Probab.,}  {\bf 23}(2), 624-638.

\bibitem{dhkp} {\sc Dolgopyat, D., Hebbar, P., Koralov, L. and Perlman, M.} (2018). Multi-type branching processes with time-dependent branching rates. {\em J. Appl. Probab.}  {\bf55,}  701--727.


   \bibitem{fk01}{\sc Fazekas, I. and Klesov,  O.} (2001). A general approach to the strong law of large numbers. {\em Theor.  Probab.  Appl.,} {\bf 45}(3), 436--449.

\bibitem{fuj} {\sc Fujimagari, T.} (1980).   On the extinction time distribution of a branching process in varying environments. {\em Adv. Appl. Probab.}  {\bf12,}  350--366.

\bibitem{j} {\sc Jagers, P.} (1974).  Galton-Watson processes in varying environment. {\em J. Appl. Probab.} {\bf11,} 174--178.

\bibitem{hjv}{\sc  Haccou, P. Jagers, P.  and  Vatutin, V. A.} (2005). Branching processes: variation, growth, and extinction of populations. Cambridge University Press, New York.

 \bibitem{jlp}{\sc James, N., Lyons, R. and Peres, Y.} (2008). A transient Markov chain with finitely many cutpoints. In: {\it IMS Collections Probability and Statistics: Essays in Honor of David A. Freedman}  {\bf2}, 24-29. Institute of Mathematical Statistics.

\bibitem{jon}  {\sc Jones, O. D.} (1997). On the convergence of multitype branching processes with varying environments. {\em Ann. Appl. Probab.} {\bf7,} 772--801.

\bibitem{ker} {\sc Kersting, G.} (2020). A unifying approach to branching processes in a varying environment. {\em J. Appl. Probab.} {\bf57,} 196--220.

\bibitem{kv} {\sc Kersting, G. and Vatutin, V.} (2017). {\em Discrete time branching processes in random environment.} John Wiley \& Sons, Inc., USA.

\bibitem{kks}{\sc Kesten, H., Kozlov, M. V. and Spitzer, F.} (1975). A limit law for random walk in a random environment. {\it Compos. Math.}  {\bf30}, 145-168.

\bibitem{key} {\sc Key, E. S.} (1987). Limiting distributions and regeneration times for multitype branching processes with immigration in a random environment. {\it Ann. Probab.}  {\bf15}(1), 344-353.

\bibitem{ka} {\sc Kimmel, M. and  Axelrod, D. E.} (2015). Branching processes in biology. Springer-NewYork.

\bibitem{lin} {\sc Lindvall, T.} (1974). Almost sure convergence of branching processes in varying and random environments. {\em Ann. Probab.} {\bf2,} 344-346.

\bibitem{lmw}{\sc Lo, C. H., Menshikov, M. V. and Wade, A. R.} (2020). Cutpoints of non-homogeneous random walks.  Preprint. arXiv: 2003.01684. [math.PR].
%
%


\bibitem{ms} {\sc Macphee, I. M. and  Schuh, H. J.} (1983). A Galton-Watson branching process in varying environments with essentially constant means and two rates of growth. {\em Aust. N. Z. J. Stat.} {\bf25,} 329--338.


\bibitem{pe04} {\sc Petrov, V. V.} (2004). A generalization of the Borel-Cantelli lemma. {\it Statist. Probab. Lett.,} {\bf67}(3), 233-239.

\bibitem{roi}{\sc Roitershtein, A.} (2007). A note on multitype branching processes with immigration in a random environment. {\it Ann. Probab.}  {\bf35}(4), 1573-1592.


\bibitem{wact}{\sc Wang, H. M.} (2013). A note on multitype branching process with bounded immigration in random environment. {\it Acta Math. Sin. (Engl. Ser.)}, {\bf29}(6), 1095-1110.
\bibitem{wmp}{\sc Wang, H. M.} (2019). On the number of points skipped by a transient (1,2) random walk on the lattice of the positive half line. {\it Markov Processes Relat. Fields,}  {\bf25}, 125-148.

%

\bibitem{wy}	{\sc Wang, H. M. and Yao, H.} (2022). Two-type linear fractional branching processes in varying environments with asymptotically constant mean matrices. {\it J. Appl. Probab.,} {\bf 59}(1), 224-255.









 \end{thebibliography}
\end{document}